\begin{document}

\title*{Sharp Global Bounds for the Hessian on Pseudo-Hermitian Manifolds}
\titlerunning{Hessian bounds}  % your contribution title if the original one is too long
\author{Sagun Chanillo \inst{1}\and
Juan J. Manfredi\inst{2}}
 \authorrunning{Chanillo-Manfredi}
% your contribution title if the original one is too long
\institute{Department of Mathematics, Rutgers University, 110 Frelinghuysen Rd., Piscataway, NJ 08854,
\texttt{chanillo@math.rutgers.edu}
\and Department of Mathematics,  University of Pittsburgh, Pittsburgh PA 15260,
 \texttt{manfredi@pitt.edu}}
%
% Use the package "url.sty" to avoid
% problems with special characters
% used in your e-mail or web address
%
\maketitle
\bigskip
\emph{Dedicated to the memory of  our friend and colleague Carlos Segovia.}
\bigskip

\section{Introduction}
\label{intro}
 In PDE theory, Harmonic Analysis enters in a fundamental way through the basic estimate valid for
 $f\in C_0^{\infty}(\mathbb{R}^n)$, which states,
\begin{equation}\label{intro:1}
\sum_{i,j=1}^n\left\| \frac{\partial^2 f}{\partial x_i \partial x_j}\right\|_{L^p(\mathbb{R}^n)} \le
 c(n,p) \left\| \Delta f\right\|_{L^p(\mathbb{R}^n)},\text{  for  }1<p<\infty.
\end{equation}
This estimate is really a statement of the $L^p$ boundedness of the
Riesz transforms, and thus (\ref{intro:1}) is a consequence of the
multiplier theorems of Marcinkiewicz and H\"ormander-Mikhlin,
\cite{stein}. More sophisticated variants  of (\ref{intro:1}) can be
proved by relying on the square function \cite{stein} and
\cite{segovia}. In particular (\ref{intro:1}) leads to a-priori
$W^{2,p}$ estimates for solutions of
\begin{equation}\label{intro:2}
\Delta u =f, \text{   for    } f\in L^p.
\end{equation}
Knowledge of $c(p,n)$ allows one to perform a perturbation of (\ref{intro:2}) and study
\begin{equation}\label{intro:3}
\sum_{i,j=1}^n a^{ij}(x)\frac{\partial^2 u}{\partial x_i \partial x_j} =f
\end{equation}
as was done by Cordes \cite{cordes}, where $A=(a^{ij})$ is bounded,
measurable, elliptic and close to the identity in a sense made
precise by Cordes. The availability of the estimates of
Alexandrov-Bakelman-Pucci and the Krylov-Safonov theory
\cite{gilbarg-trudinger} allows one to obtain estimates for
(\ref{intro:3}) in full generality without relying on a perturbation
argument. See also \cite{fanghualin}.\par

Our focus here will be to study the CR analog of (\ref{intro:3}). Since at this moment in time there
is no suitable Alexandrov-Bakelman-Pucci estimate for the CR analog of (\ref{intro:3}) we will
be seeking a perturbation approach  based on an analog of (\ref{intro:1}) on a  CR manifold. Our
main interest is the case $p=2$ in (\ref{intro:1}). In this case a simple integration by parts suffices
to prove (\ref{intro:1}) in $\mathbb{R}^n$. We easily see that for  $f\in C_0^{\infty}(\mathbb{R}^n)$ we have
\begin{equation}\label{intro:4}
\sum_{i,j=1}^n\left\| \frac{\partial^2 f}{\partial x_i \partial x_j}\right\|^2_{L^2(\mathbb{R}^n)} =
 \left\| \Delta f\right\|^2_{L^2(\mathbb{R}^n)}.
\end{equation}
In the case of (\ref{intro:1}) on a CR manifold a result has been
recently obtained by Domokos-Manfredi \cite{domokos-manfredi} in the
Heisenberg group. The proof in \cite{domokos-manfredi} makes uses of
the harmonic analysis techniques in the Heisenberg group developed
by Strichartz \cite{strichartz} that
 will not apply to studying such inequalities for the Hessian on a general CR manifold, although
 other nilpotent groups of step 2 can be treated similarly \cite{domokos-fanciullo}.
\par
 Instead we shall proceed by integration by parts and use of the Bochner technique. A Bochner
 identity on a CR manifold was obtained by Greenleaf \cite{greenleaf} and will play an
 important role in our computations.\par
 We now turn to our setup. We consider a smooth orientable manifold $M^{2n+1}$.
 Let $\mathcal{V}$ be a vector sub-bundle of the complexified tangent bundle
 $\mathbb{C}TM$. We say that $\mathcal{V}$ is a CR bundle if
 \begin{equation}\label{intro:5}
 \mathcal{V}\cap \overline{\mathcal{V}}=\{0 \},\text{    } [\mathcal{V},\mathcal{V}]\subset \mathcal{V}, \text{  and
 }\textrm{dim}_{\mathbb{C}}\mathcal{V}=n.
 \end{equation}
 A manifold equipped with a sub-bundle satisfying (\ref{intro:5}) will be called a CR manifold.
 See the book by Tr\`eves  \cite{treves}.
Consider the sub-bundle
 \begin{equation}\label{intro:6}
 H=\textrm{Re}\left(\mathcal{V} \oplus
 \overline{\mathcal{V}}\right).
 \end{equation}
 $H$ is a real $2n$-dimensional vector sub-bundle of the tangent bundle $TM$.  We assume that
the real line bundle $H^{\perp}\subset T^*M$, where $T^*M$ is the
cotangent bundle, has a smooth non-vanishing global section. This is
a choice of a non-vanishing  $1$-form $\theta$ on $M$ and
$(M,\theta)$ is said to define  a pseudo-hermitian structure.  $M$
is then called a pseudo-hermitian manifold. Associated to $\theta$
we have the Levi form $L_{\theta}$ given by
\begin{equation}\label{intro:7}
L_{\theta}(V, \overline{W})=-i\, d\theta(V\wedge\overline{W}), \text{   for   } V,W\in\mathcal{V}.
\end{equation}
We shall assume that $L_{\theta}$ is definite and orient $\theta$ by
requiring that $L_{\theta}$ is positive definite. In this case, we
say that $M$ is strongly pseudo-convex.  We shall always assume that
$M$ is strongly pseudo-convex.\par On a manifold $M$ that carries a
pseudo-hermitian structure, or a pseudo-hermitian manifold, there is
a unique vector field $T$, transverse to $H$ defined in
(\ref{intro:6}) with the properties
\begin{equation}\label{intro:8}
\theta(T)=1\text{
\ \ \    and\ \ \    } d\theta(T,\cdot)=0.
\end{equation}
$T$ is also called the Reeb vector field. The volume element
on $M$ is given by
\begin{equation}\label{intro:9}
dV=\theta\wedge(d\theta)^n.
\end{equation}
A complex valued $1$-form $\eta$ is said to be of type $(1,0)$ if
$\eta(\overline{W})=0$ for all  $W\in\mathcal{V}$, and of type
$(0,1)$ if $\eta(W)=0$ for all $W\in\mathcal{V}$.
\par
An admissible co-frame on an open subset of $M$ is a collection of
$(1,0)$ forms $\{\theta^1, \ldots,\theta^{\alpha},\ldots,\theta^n\}$
that locally form a basis for $\mathcal{V}^*$ and such that
$\theta^{\alpha}(T)=0$ for $1\le\alpha\le n$. We set
$\theta^{\overline{\alpha}}=\overline{\theta^{\alpha}}$. We then
have that $\{ \theta,\theta^{\alpha}, \theta^{\overline{\alpha}} \}$
locally form a basis of the complex co-vectors, and the dual basis
are the complex vector fields $\{T, Z_{\alpha},
\overline{Z_{\alpha}}\}$. For $f\in C^2(M)$ we set
\begin{equation}\label{intro:10}
Tf=f_0,\hspace{15pt} Z_\alpha f=f_\alpha,\hspace{15pt} \overline{Z_{\alpha}}f=f_{\overline{\alpha}}.
\end{equation}
We note that in the sequel all our functions  $f$ will be real valued.
\par
If follows from (\ref{intro:5}), (\ref{intro:7}), and  (\ref{intro:8}) that we can express
\begin{equation}\label{intro:11}
d\theta= i\,  h_{\alpha\overline{\beta}}\,\, \theta^{\alpha}\wedge
\theta^{\overline{\beta}}.
\end{equation}
The hermitian matrix $( h_{\alpha\overline{\beta}})$ is called the
Levi matrix. \par On pseudo-hermitian manifolds Webster
\cite{webster} has defined a connection, with connection forms
$\omega_{\alpha}^{\beta}$ and torsion forms $\tau_{\beta}=
A_{\beta\alpha}\theta^{\alpha}$, with structure relations
\begin{equation}\label{intro:12}
d\theta^{\beta}=\theta^{\alpha}\wedge \omega_{\alpha}^{\beta}+
\theta\wedge \tau_{\beta}, \hspace{20pt} \omega_{\alpha\overline{\beta}}+\omega_{\bar{\beta}\alpha}=dh_{\alpha\overline{\beta}}
\end{equation}
and
\begin{equation}\label{intro:13}
A_{\alpha\beta}=A_{\beta\alpha}.
\end{equation}
Webster defines a curvature form
$$\prod{}_{\alpha}^{\beta}=d\omega_{\alpha}^{\beta} -
\omega_{\alpha}^{\gamma}\wedge \omega_{\gamma}^{\beta},$$
where we have used the Einstein summation convention. Furthermore
in \cite{webster} it is shown that
$$\prod{}_{\alpha}^{\beta}= R_{\alpha\bar{\beta}\rho\bar{\sigma}}
\theta^\rho\wedge\theta^{\bar{\sigma}}+ \textrm{ other terms.}$$
Contracting two indices using the Levi matrix $(h_{\alpha\bar{\beta}})$
we get
\begin{equation}\label{intro:14}
R_{\alpha\bar{\beta}}= h^{\rho\bar{\sigma}}\,\,R_{\alpha\bar{\beta}\rho\bar{\sigma}}.
\end{equation}
The Webster-Ricci tensor $\textrm{Ric}(V,V)$ for $V\in\mathcal{V}$ is then
defined as
\begin{equation}\label{intro:15}
\textrm{Ric}(V,V)=R_{\alpha\bar{\beta}}x^{\alpha}\overline{x^\beta}, \text{  for  }
V=\sigma_{\alpha}x^{\alpha}Z_{\alpha}.\
\end{equation}
The torsion tensor is defined for $V\in \mathcal{V}$ as follows
\begin{equation}\label{intro:16}
\textrm{Tor}(V,V)=i  \left( A_{\bar{\alpha}\bar{\beta}}\overline{x^\alpha}
\bar{x}_\beta- A_{\alpha\beta}x^{\alpha}x^{\beta}\right).
\end{equation}
In \cite{webster}, Prop. (2.2), Webster proves that the torsion
vanishes if $\mathcal{L}_T$ preserves $H$, where $\mathcal{L}_T$ is
the Lie derivative. In particular if $M$ is a hypersurface in
$\mathbb{C}^{n+1}$ given by the defining function $\rho$
\begin{equation}\label{intro:17}
\textrm{Im} z_{n+1}=\rho(z, \overline{z}),
\hspace{20pt} z=(z_1,z_2,\ldots, z_n)
\end{equation}
then Webster's hypothesis is fulfilled and the torsion tensor
vanishes on $M$. Thus for the standard CR structure on the sphere
$S^{2n+1}$ and on the Heisenberg group the torsion vanishes.\par
Our main focus will be the sub-Laplacian $\Delta_b$. We define
the horizontal gradient $\nabla_b$ and $\Delta_b$ as follows:
\begin{equation}\label{intro:18}
\nabla_b f=\sum_{\alpha} f_{\overline{\alpha}} Z_{\alpha},
\end{equation}
\begin{equation}\label{intro:19}
\Delta_b f
=\sum_{\alpha}
f_{\alpha\bar{\alpha}}
+f_{\bar{\alpha}\alpha}.
\end{equation}
When $n=1$ we will need to frame our results in terms of the
CR Paneitz operator. Define the Kohn Laplacian $\square_b$ by
\begin{equation}\label{intro:20}
\square_b=\Delta_b+ i \, T.
\end{equation}
Then the CR Paneitz operator $P_0$ is defined by
\begin{equation}\label{intro:21}
P_0f= \left(\overline{\square}_b\square_b+\square_b\overline{\square}_b\right) f
-2 \left(Q+ \overline{Q}\right)f,
\end{equation}
where
$$Qf=2 i \, (A^{11}f_1)_1.$$
See \cite{lee} and \cite{hunglinchiu} for further  details.
\vfill\newpage
\section{The Main Theorem}\label{maintheorem}
\begin{theorem}\label{maintheorem}
Let $M^{2n+1}$ be a strictly pseudo-convex pseudo-hermitian manifold.
When $M$ is non compact assume that $f\in C_0^{\infty}(M)$. When  $M$ is compact
with $\partial M= \emptyset $ we may assume $f\in C^{\infty}(M)$.
When $f$ is real valued and $n\ge 2$ we have
\begin{equation}\label{main:a}
\sum_{\alpha,\beta} \int_M \!\! ||f_{\alpha\beta}||^2 +
||f_{\alpha\bar{\beta}}||^2 +\int_M\!\!  \left(\!
\textrm{Ric}+\frac{n}{2}\textrm{Tor}\right)\!\! (\nabla_b f,\nabla_b
f) \le\frac{(n+2)}{2n}\!\! \int_M \!\! |\Delta_b f|^2.
\end{equation}
When $n=1$ assume that the CR Paneitz operator $P_0\ge 0$. For
$f\in C_0^\infty(M)$ we then have
\begin{equation}\label{main:b}
\int_M \!\! ||f_{11}||^2 + ||f_{1\bar{1}}||^2 +\int_M\!\!
\left(\textrm{Ric}-\frac{3}{2}\textrm{Tor}\right)(\nabla_b
f,\nabla_b f) \le\frac{3}{2}\!\! \int_M \!\! |\Delta_b f|^2.
\end{equation}
Here by $\sum_{\alpha,\beta}||f_{\alpha\beta}||^2$ we mean the
Hilbert-Schmidt norm square of the tensor and similarly for
$\sum_{\alpha,\beta}||f_{\alpha\bar{\beta}}||^2$.
\end{theorem}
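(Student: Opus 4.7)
The plan is to establish both inequalities from a single CR Bochner-type identity, obtained by integrating $|\Delta_b f|^2$ against the pseudo-Hermitian volume (\ref{intro:9}), integrating by parts, and repeatedly commuting horizontal derivatives by means of the Webster structure equations (\ref{intro:12})--(\ref{intro:14}). The commutation relations I would use are $[Z_\alpha,Z_\beta]=0$ (from the involutivity in (\ref{intro:5})), $[Z_\alpha,\overline{Z_\beta}]=-ih_{\alpha\bar\beta}T$ modulo connection terms (from (\ref{intro:11})), and $[T,Z_\alpha]$ whose principal torsion part is read off from $\tau_\beta=A_{\beta\alpha}\theta^\alpha$ in (\ref{intro:12}). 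Third-derivative commutators then produce, after contraction via (\ref{intro:14}), the Webster--Ricci tensor (\ref{intro:15}); the $T$-commutators produce the torsion tensor (\ref{intro:16}). This is exactly the route underlying the CR Bochner identity of Greenleaf \cite{greenleaf}.

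Concretely, I would decompose $\Delta_b f = P + \overline{P}$ with $P := \sum_\alpha f_{\alpha\bar\alpha}$; the identity $\sum_\alpha [Z_\alpha,\overline{Z_\alpha}]=-inT$ (modulo connection) gives $P-\overline{P}=-inTf$, so $\Delta_b f = 2\,\mathrm{Re}\,P$. Computing $\int_M |P|^2\, dV$ by integrating by parts twice produces the mixed-Hessian squared norm $\sum_{\alpha,\beta}\int |f_{\alpha\bar\beta}|^2$, a Ricci contribution from the third-order commutator $[Z_\beta,\overline{Z_\alpha}]$ applied to $f_\alpha$, and torsion contributions from integrating $T$ by parts. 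A parallel calculation starting from $\int\bigl|\sum_\alpha f_{\alpha\alpha}\bigr|^2$ produces the holomorphic-Hessian norm $\sum_{\alpha,\beta}\int |f_{\alpha\beta}|^2$ plus further torsion terms, with no Ricci contribution since $[Z_\alpha,Z_\beta]=0$. Taking the correct linear combination of these two identities yields the targeted inequality.

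The main obstacle is the bookkeeping: after collecting all curvature and torsion contributions, one must choose the weights so that a residual $(Tf)^2$ term can be absorbed by Cauchy--Schwarz. For $n\ge 2$, this term can be hidden inside the trace-free part of $f_{\alpha\bar\beta}$ relative to $\frac{1}{n}\delta_{\alpha\beta}\Delta_b f$, pinning down the sharp constant $(n+2)/(2n)$; equality is expected when $f_{\alpha\bar\beta} = \frac{1}{n}\delta_{\alpha\beta}\Delta_b f$ and $Tf\equiv 0$. For $n=1$ there is no trace-free slack, and the leftover term instead equals $\int_M f\,P_0 f\, dV$; this should be seen by expanding $\overline{\square}_b\square_b + \square_b\overline{\square}_b$ from (\ref{intro:20}) and identifying, via the commutator $[\square_b,\overline{\square}_b]$, the correction $2(Q+\overline{Q})$ appearing in (\ref{intro:21}). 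The hypothesis $P_0\ge 0$ then yields (\ref{main:b}) with sharp constant $3/2=(1+2)/(2\cdot 1)$, consistent with the $n\ge 2$ formula. The integrations by parts are justified either by $f\in C_0^\infty(M)$ or by $M$ being compact without boundary.
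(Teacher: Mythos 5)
Your overall strategy --- Greenleaf's Bochner machinery, integration by parts, a Cauchy--Schwarz step on the trace of the mixed Hessian, and the Paneitz operator for $n=1$ --- is indeed the paper's route, but two of your intermediate claims have genuine problems. First, the ``parallel calculation'' asserting that $\int_M\bigl|\sum_\alpha f_{\alpha\alpha}\bigr|^2$ produces the full norm $\sum_{\alpha,\beta}\int_M|f_{\alpha\beta}|^2$ plus torsion terms, with no Ricci contribution, cannot be correct: already on the Heisenberg group (zero torsion, zero curvature) a function such as $\mathrm{Re}(z_1z_2)$ has purely off-diagonal holomorphic Hessian, so $\sum_\alpha f_{\alpha\alpha}\equiv 0$ while $\sum_{\alpha,\beta}|f_{\alpha\beta}|^2>0$; the square of the (non-tensorial) diagonal sum cannot recover the Hilbert--Schmidt norm. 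In the actual argument the holomorphic Hessian enters through Greenleaf's Lemma~4, which expresses the cross term $i\int_M\sum_\alpha(f_{\bar\alpha}f_{\alpha 0}-f_\alpha f_{\bar\alpha 0})$ as $\frac{2}{n}\int_M\bigl(\sum_{\alpha,\beta}(|f_{\alpha\bar\beta}|^2-|f_{\alpha\beta}|^2)-\mathrm{Ric}(\nabla_bf,\nabla_bf)\bigr)$; a Ricci term is unavoidably attached to the $|f_{\alpha\beta}|^2$ bookkeeping.

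Second, the decisive step that produces the sharp constant is missing. The paper uses \emph{two} independent expressions for the same cross term (Greenleaf's Lemmas~4 and~5), applies Cauchy--Schwarz in the form $\bigl|\sum_\alpha f_{\alpha\bar\alpha}\bigr|^2\le n\sum_{\alpha,\beta}|f_{\alpha\bar\beta}|^2$ to the second, and then takes the convex combination with weights $1-c$ and $c=1/(n+1)$; every term on the left then acquires the common factor $(n-1)/(n+1)$, which can be cancelled only when $n\ge 2$, and the right-hand side becomes $\frac{n-1}{n+1}\cdot\frac{n+2}{2n}\int_M|\Delta_bf|^2$. Your ``absorb $(Tf)^2$ into the trace-free part of $f_{\alpha\bar\beta}$'' heuristic gestures at the same Cauchy--Schwarz, but without the two-identity convex combination there is no mechanism forcing Ric, Tor, and both Hessian norms to emerge with one common positive coefficient, nor an explanation of why the argument degenerates exactly at $n=1$. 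For $n=1$ the required input is also more specific than ``the leftover equals $\int_M fP_0f$'': one needs the Li--Luk identities to convert the cross term into $-\int_M f_0^2+\int_M\mathrm{Tor}(\nabla_bf,\nabla_bf)$ and then Chiu's identity $\int_M f_0^2=\int_M|\Delta_bf|^2+2\int_M\mathrm{Tor}(\nabla_bf,\nabla_bf)-\frac12\int_M P_0f\cdot f$; it is precisely this torsion bookkeeping that flips the coefficient from $+\frac n2\,\mathrm{Tor}$ in (\ref{main:a}) to $-\frac32\,\mathrm{Tor}$ in (\ref{main:b}), a sign change your sketch does not account for.
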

\begin{proof}
We begin by noting the Bochner identity established by Greenleaf,
Lemma 3 in \cite{greenleaf}:
\begin{eqnarray} \label{main:24}
\frac{1}{2}\Delta_b\left(|\nabla_b f|^2\right) & = & \sum_{\alpha,\beta}
 |f_{\alpha\beta}|^2 + |f_{\alpha\bar{\beta}}|^2+ \textrm{Re}\left(\nabla_bf,\nabla_b(\Delta_b f)\right)
 \\
  & + & \left(\textrm{Ric}+\frac{n-2}{2}\textrm{Tor}\right)(\nabla_b,\nabla_b)+ i
  \sum_{\alpha}\left(f_{\overline{\alpha}}f_{\alpha 0}- f_{\alpha}f_{\bar{\alpha} 0}\right). \notag
\end{eqnarray}
where for $V,W\in\mathcal{V}$ we use the notation
$(V,W)=L_{\theta}(V,\overline{W})$ and $|V|=(V,V)^{1/2}$. We have
also abused notation above and represented the Hilbert-Schmidt norm
of the tensor $f_{\alpha\beta}$ in terms of its expression in the
local frame which we will continue to do in the rest of the proof.
Using the fact that $f\in C_0^{\infty}(M)$ or if $\partial
M=\emptyset$, $M$ is compact, integrate (\ref{main:24}) over $M$
using the volume (\ref{intro:9}) to get
\begin{eqnarray}\label{main:25}
\int_M \sum_{\alpha,\beta}
 |f_{\alpha\beta}|^2   & + &|f_{\alpha\bar{\beta}}|^2 +
 \left(\textrm{Ric}+\frac{n-2}{2}\textrm{Tor}\right)(\nabla_b f,\nabla_b f)
 \\ & +&  i \int_M   \sum_{\alpha}\left(f_{\overline{\alpha}}f_{\alpha 0}- f_{\alpha}f_{\bar{\alpha} 0}\right)
 = -\int_M \textrm{Re}\left(\nabla_b f, \nabla_b(\Delta_b f)\right). \notag
\end{eqnarray}
Integration by parts in the term on the right yields (see (5.4) in \cite{greenleaf})
\begin{equation}\label{main:26}
-\int_M\textrm{Re}(\nabla_b f, \nabla_b(\Delta_b f))= \frac{1}{2}
\int_M |\Delta_b f|^2.
\end{equation}
Combining (\ref{main:25}) and (\ref{main:26}) we get
\begin{eqnarray}\label{main:27}
\int_M \sum_{\alpha,\beta}
 |f_{\alpha\beta}|^2   & + &|f_{\alpha\bar{\beta}}|^2+ \int_M
 \left(\textrm{Ric}+\frac{n-2}{2}\textrm{Tor}\right)(\nabla_b f,\nabla_b f)
 \\ & +&  i \int_M   \sum_{\alpha}\left(f_{\overline{\alpha}}f_{\alpha 0}- f_{\alpha}f_{\bar{\alpha} 0}\right)
 =  \frac{1}{2}
\int_M |\Delta_b f|^2.
 \notag
\end{eqnarray}
To handle the third integral in the left-hand side, we use Lemmas 4  and 5 of
\cite{greenleaf} (valid for real functions) according to which we have
\begin{equation}\label{main:28}
 i \int_M   \sum_{\alpha} (f_{\overline{\alpha}}f_{\alpha 0}- f_{\alpha}f_{\bar{\alpha} 0} )=\frac{2}{n} \int_M\!\! \bigg( \sum_{\alpha,\beta}\big( | f_{\alpha\bar{\beta}}|^2-|f_{\alpha\beta}|^2\big)-
 \textrm{Ric}(\nabla_b f, \nabla_b f)\!\!\bigg),
\end{equation}
and
\begin{eqnarray}\label{main:29}
 i \int_M   \sum_{\alpha} (f_{\overline{\alpha}}f_{\alpha 0}- f_{\alpha}f_{\bar{\alpha} 0} ) = & - &\frac{4}{n}
  \int_M \big| \sum_{\alpha}  f_{\alpha\bar{\alpha}}\big|^2\\
  & + &\frac{1}{n}\int_M |\Delta_b f|^2 \notag \\
 & + &\int_M
 \textrm{Tor}(\nabla_b f, \nabla_b f). \notag
\end{eqnarray}
Applying the Cauchy-Schwarz inequality to the the first term in the right-hand side of (\ref{main:29}) we get
\begin{eqnarray}\label{main:30}
 i \int_M   \sum_{\alpha} (f_{\overline{\alpha}}f_{\alpha 0}- f_{\alpha}f_{\bar{\alpha} 0} ) \ge & - &4
  \int_M  \sum_{\alpha, \beta}  |f_{\alpha\bar{\beta}}|^2\\
  & + &\frac{1}{n}\int_M |\Delta_b f|^2 \notag \\
 & + &\int_M
 \textrm{Tor}(\nabla_b f, \nabla_b f). \notag
\end{eqnarray}
Multiply (\ref{main:28}) by $1-c$ and (\ref{main:30}) by $c$  , $0<c<1$, and where $c$ will eventually
be chosen to be $1/(n+1)$, and add to get

\begin{eqnarray}\label{main:31}
 i \int_M   \sum_{\alpha} (f_{\overline{\alpha}}f_{\alpha 0}- f_{\alpha}f_{\bar{\alpha} 0} ) \ge & 2  &
 \frac{(1-c)}{n}
  \int_M  \sum_{\alpha, \beta} \left( |f_{\alpha\bar{\beta}}|^2- |f_{\alpha\beta}|^2\right)\\
  & - & 2\frac{(1-c)}{n} \int_M  \textrm{Ric}(\nabla_b f, \nabla_b f) \notag \\
  & - & 4c\int_M \sum_{\alpha,\beta} |f_{\alpha\overline{\beta}}|^2   \notag \\
  & + &\frac{c}{n}\int_M |\Delta_b f|^2
+c \int_M
 \textrm{Tor}(\nabla_b f, \nabla_b f). \notag
\end{eqnarray}

We now insert (\ref{main:31}) into (\ref{main:27})  and simplify. We  have
\begin{eqnarray}\label{main:32}
\left(1-\frac{2(1-c)}{n}\right) \int_M  \textrm{Ric}(\nabla_b f, \nabla_b f) &+&\notag\\
 \left(\frac{(n-2)}{2}+c\right) \int_M \textrm{Tor}(\nabla_b f, \nabla_b f)&+& \notag \\
 \left(1+\frac{2(1-c)}{n} - 4c\right) \int_M \sum_{\alpha,\beta} |f_{\alpha\overline{\beta}}|^2 &+&  \\
 \left( 1-\frac{2(1-c)}{n}\right) \int_M \sum_{\alpha,\beta} |f_{\alpha \beta}|^2 & \le &
 \left(\frac{1}{2}-\frac{c}{n}\right) \int_M |\Delta_b f|^2.    \notag
\end{eqnarray}
Let $c=1/(n+1)$. Then (\ref{main:32}) becomes
\begin{eqnarray}\label{main:33}
\left(\frac{n-1}{n+1}\right)\bigg[ \int_M  \sum_{\alpha,\beta} \big(|f_{\alpha\beta}|^2 +\!\!
|f_{\alpha\bar{\beta}}|^2 \big) & + &
\int_M \!\!\left(\textrm{Ric}+\frac{n}{2}\textrm{Tor}\right) (\nabla_b f,\nabla_b f)
\bigg]\\
& \le & \left(\frac{n-1}{n+1}\right)\left(\frac{n+2}{2n}\right) \int_M |\Delta_b f|^2. \notag
\end{eqnarray}
Since $n\ge 2$, $n-1>0$ and we can cancel the factor $\frac{n-1}{n+1}$ from both sides
to get (\ref{main:a}).\par
We now establish (\ref{main:b}) using some results by Li-Luk \cite{liluk} and \cite{hunglinchiu}. When $n=1$, identity (\ref{main:27}) becomes
\begin{eqnarray}\label{main:33}
\int_M
 |f_{1\bar{1}}|^2   & + &|f_{11}|^2+ \int_M
 \left(\textrm{Ric}-\frac{1}{2}\textrm{Tor}\right)(\nabla_b f,\nabla_b f)
 \\ & +&  i \int_M  \left(f_{10}f_{\bar{1}}-f_{\bar{1} 0}f_1\right)
 =  \frac{1}{2}
\int_M |\Delta_b f|^2.
 \notag
\end{eqnarray}
By (3.8) in \cite{liluk} we have
$$ i \int_M
\left(f_{01}f_{\bar{1}}-f_{0\bar{1} }f_1\right)= -\int_M f_0^2.$$
\end{proof}
Moreover, by (3.6) in \cite{liluk} we also have
$$i\left(f_{10}f_{\bar{1}}-f_{\bar{1} 0}f_1\right)=i \left(f_{01}f_{\bar{1}}-f_{0 \bar{1}}f_1\right)+
\textrm{Tor}(\nabla_b f,\nabla_b f)$$ and combining the last two identities we get
\begin{equation}\label{main:34}
i \int_M  \left(f_{10}f_{\bar{1}}-f_{\bar{1} 0}f_1\right)=  -\int_M f_0^2+\int_M \textrm{Tor}(\nabla_b f,\nabla_b f).
\end{equation}
Substituting (\ref{main:34}) into (\ref{main:33}) we obtain
\begin{eqnarray}\label{main:35}
\int_M
 |f_{1\bar{1}}|^2   +|f_{11}|^2  +  \int_M
 \left(\textrm{Ric}+\frac{1}{2}\textrm{Tor}\right) (\nabla_b f,\nabla_b f)
&-& \int_M  f_0^2 \\
&=&  \frac{1}{2}   \int_M |\Delta_b f|^2.\notag
\end{eqnarray}
Next, we use (3.4) in \cite{hunglinchiu},
\begin{equation}\label{main:36}
\int_M f_0^2=\int_M |\Delta_b f|^2+ 2\int_M\textrm{Tor}(\nabla_b f, \nabla_b f)-\frac{1}{2}\int_M
P_0f\cdot f.
\end{equation}
Finally,  substitute (\ref{main:36}) into (\ref{main:35}) and  simplify to get
\begin{eqnarray}
\int_M
 |f_{1\bar{1}}|^2   +|f_{11}|^2  +  \int_M
 \left(\textrm{Ric}-\frac{3}{2}\textrm{Tor}\right) (\nabla_b f,\nabla_b f)
  & + &   \frac{1}{2} \int_M
P_0f\cdot f  \notag \\
& = &\frac{3}{2}  \int_M |\Delta_b f|^2.\notag
\end{eqnarray}
Assuming $P_0\ge0$ we obtain (\ref{main:b}).
\qed \par\bigskip
We now wish to make some remarks about our theorem:\par\medskip
{\textbf{(a)}} It is shown in  \cite{domokos-manfredi} that on the Heisenberg group the constant $(n+2)/2n$ is sharp.
Since the Heisenberg group is a pseudo-hermitian manifold with $\textrm{Ric}\equiv0$ and
$\textrm{Tor}\equiv0$, we easily conclude our theorem is sharp and contains the result proved in \cite{domokos-manfredi}.\par\medskip
{\textbf{(b)}} We notice that when we consider manifolds such that
$\textrm{Ric}+(n/2)\textrm{Tor} >0$, then for $n\ge 2$, in general we have the strict inequality
$$\sum_{\alpha,\beta}\int_M |f_{\alpha\beta}|^2+|f_{\alpha\bar{\beta}}|^2 <
\frac{n+2}{2n}
\int_M |\Delta_bf|^2.$$ On the Heisenberg group  $\textrm{Ric}\equiv0$,
$\textrm{Tor}\equiv0$ and the constant $(n+2)/2n$ is achieved by a function with fast decay
\cite{domokos-manfredi}. Thus, the Heisenberg group is, in a sense, extremal for inequality
(\ref{main:a}) in Theorem \ref{maintheorem}. A similar remark holds for inequality ({\ref{main:b}).\par\medskip
  {\textbf{(c)}} The hypothesis on the Paneitz operator in the case $n=1$ in our theorem is satisfied
  on manifolds with zero torsion. A result from \cite{changchengchiu} shows that if the torsion vanishes
  the Paneitz operator is non-negative.\par\medskip
 {\textbf{(d)}} We note that Chiu \cite{hunglinchiu} shows how to perturb the standard pseudo-hermitian
 structure in $\mathbb{S}^3$ to get a structure with non-zero torsion, for which $P_0>0$ and
 $\textrm{Ric}-(3/2)\textrm{Tor}>1$. To get such a structure, let $\theta$ be the contact form
 associated to the standard structure on $\mathbb{S}^3$. Fix $g$ a smooth function on $\mathbb{S}^3$.
 For $\epsilon>0$ consider
 \begin{equation}\label{d:1}
 \tilde\theta= e^{2f} \theta, \text{   where   } f=\epsilon^3 \sin(\frac{g}{\epsilon}).
 \end{equation}
 Since the sign of the Paneitz operator  is a CR invariant and $\theta$ has zero torsion
we conclude by \cite{changchengchiu} that the CR Paneitz operator
$\tilde{P}_0$ associated to $\tilde\theta$ satisfies
$\tilde{P}_0>0$. Furthermore following the computation in Lemma
(4.7) of \cite{hunglinchiu}, we easily have for small $\epsilon$
that
$$\textrm{Ric}-\frac{3}{2}\textrm{Tor}\ge \left( 2+ O(\epsilon)\right) e^{-2f}\ge 1\ge 0.$$
Thus, the hypothesis of the case $n=1$ in our theorem are met, and for such $(M, \tilde\theta)$ we have,
for $f\in C^{\infty}(M)$ the estimate
$$\int_M |f_{11}|^2 + |f_{1\bar{1}}|^2 \, dV\le \frac{3}{2}\int_M |\Delta_b f|^2 \, dV.$$\par\medskip
{\textbf{(e)}} Compact pseudo-hermitian 3-manifolds with negative Webster curvature may be
constructed by considering the co-sphere bundle of a compact Riemann surface of genus $g$,
$g\ge 2$. Such a construction is given in \cite{chern-hamilton}.

\section{Applications to PDE}
For applications to subelliptic PDE it is helpful to re-state our main result Theorem \ref{maintheorem} in its
real version. We set
$$X_i=\textrm{Re}(Z_i)\text{   and    } X_{i+n}=\textrm{Im}(Z_i)$$
for $i=1,2\ldots,n$. The horizontal gradient of a function is the vector field
$$\mathfrak{X}(f)=\sum_{i=1}^{2n} X_i(f) X_i.$$
Its sublaplacian is  given by
$$\Delta_{\mathfrak{X}} f=\sum_{i=1}^{2n} X_iX_i(f),$$
and the horizontal second derivatives are the $2n\times2n$ matrix
$$ \mathfrak{X}^2f=\left(X_i X_j (f)\right).$$
For $f$ real we  have the following relationships
$$\nabla_b f=\mathfrak{X}(f)+i\left( \sum_{i=1}^n X_i(f) X_{i+n}-X_{i+n}(f)X_i\right),$$
$$\Delta_b f=2 \,\Delta_{\mathfrak{X}} f, $$ and
$$\sum_{\alpha,\beta}     |f_{\alpha\beta}|^2 + |f_{\alpha\bar{\beta}}|^2=2  \sum_{i,j} |X_iX_j(f)|^2=2 |\mathfrak{X}^2 f|^2,$$
where the expression on the extreme right is the Hilbert-Schmidt
norm square of the tensor taken by viewing the Levi form as a metric
on $H$.

\begin{theorem}\label{theorem2}
Let $M^{2n+1}$ be a strictly pseudo-convex pseudo-hermitian manifold.
When $M$ is non compact assume that $f\in C_0^{\infty}(M)$. When  $M$ is compact
with $\partial M= \emptyset $ we may assume $f\in C^{\infty}(M)$.
When $f$ is real valued and $n\ge 2$ we have
\begin{equation}\label{main2:a}
 \int_M  |\mathfrak{X}^2 f|^2+\int_M  \frac{1}{2} \left(\! \textrm{Ric}+\frac{n}{2}\textrm{Tor}\right) (\nabla_b f,\nabla_b f)
\le\frac{(n+2)}{n}\!\! \int_M \!\! |\Delta_{\mathfrak{X}} f|^2.
\end{equation}
When $n=1$ assume that the CR Paneitz operator $P_0\ge 0$. For
$f\in C_0^\infty(M)$ we then have
\begin{equation}\label{main2:b}
 \int_M  |\mathfrak{X}^2 f|^2+\int_M  \frac{1}{2} \left(\! \textrm{Ric}-\frac{3}{2}\textrm{Tor}\right) (\nabla_b f,\nabla_b f)
\le 3 \int_M \!\! |\Delta_{\mathfrak{X}} f|^2.
\end{equation}
\end{theorem}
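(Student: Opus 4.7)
The plan is to derive Theorem \ref{theorem2} as a direct consequence of Theorem \ref{maintheorem} by translating every complex quantity in (\ref{main:a}) and (\ref{main:b}) into its real counterpart via the three identities stated just above the theorem statement, namely
\[
\sum_{\alpha,\beta}|f_{\alpha\beta}|^2+|f_{\alpha\bar\beta}|^2=2|\mathfrak{X}^2 f|^2, \qquad \Delta_b f=2\,\Delta_{\mathfrak{X}} f,
\]
together with the fact that the Ricci and torsion terms, being expressed in terms of $\nabla_b f$, are already in a form common to both formulations. These identities are the only inputs needed beyond Theorem \ref{maintheorem} itself.

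For (\ref{main2:a}), I would start from inequality (\ref{main:a}) of Theorem \ref{maintheorem}, substitute $\sum_{\alpha,\beta}(|f_{\alpha\beta}|^2+|f_{\alpha\bar\beta}|^2)=2|\mathfrak{X}^2 f|^2$ on the left-hand side, and substitute $|\Delta_b f|^2=4|\Delta_{\mathfrak{X}} f|^2$ on the right-hand side. The result is
\[
2\int_M |\mathfrak{X}^2 f|^2+\int_M\!\!\left(\textrm{Ric}+\frac{n}{2}\textrm{Tor}\right)(\nabla_b f,\nabla_b f) \le \frac{2(n+2)}{n}\int_M |\Delta_{\mathfrak{X}} f|^2,
\]
and dividing through by $2$ yields exactly (\ref{main2:a}). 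The argument for (\ref{main2:b}) is identical in structure: start from (\ref{main:b}), apply the same two substitutions with $n=1$, and divide by $2$; the assumed nonnegativity of the Paneitz operator $P_0$ is inherited directly from Theorem \ref{maintheorem}.

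There is essentially no obstacle in this derivation beyond checking that the three relationships between complex and real quantities hold as stated. The identity $\Delta_b f=2\Delta_{\mathfrak{X}} f$ follows from writing $Z_\alpha=X_\alpha-iX_{\alpha+n}$ and expanding $f_{\alpha\bar\alpha}+f_{\bar\alpha\alpha}$, where the imaginary cross terms cancel because $f$ is real; the identity on the Hilbert--Schmidt norms follows by the same expansion applied to each pair $(Z_\alpha,Z_\beta)$ and $(Z_\alpha,\overline{Z_\beta})$, noting that the Levi form serves as the metric on $H$ that makes $\{X_i\}_{i=1}^{2n}$ orthonormal. Since the paper presents these identities as a given, the proof reduces to a one-line algebraic substitution into Theorem \ref{maintheorem} in each of the two cases.
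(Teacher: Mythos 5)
Your proposal is correct and is essentially the paper's own (implicit) argument: the authors state the identities $\sum_{\alpha,\beta}|f_{\alpha\beta}|^2+|f_{\alpha\bar\beta}|^2=2|\mathfrak{X}^2f|^2$ and $\Delta_b f=2\Delta_{\mathfrak{X}}f$ immediately before Theorem \ref{theorem2} and give no further proof, intending exactly the substitution into Theorem \ref{maintheorem} followed by division by $2$ that you carry out. Your arithmetic checks out in both cases ($n\ge 2$ and $n=1$), so nothing further is needed.
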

Let $A(x)= (a_{ij}(x))$ a $2n\times 2n$   matrix. Consider the second order linear operator
in non-divergence form
\begin{equation}\label{pde:41}
\mathcal{A}u(x)=\sum_{i,j=1}^{2n} a_{ij}(x)X_i X_j u(x),
\end{equation}
where coefficients $a_{ij}(x)$ are bounded measurable functions in a domain $\Omega\subset M^{2n+1}$.
Cordes \cite{cordes} and Talenti \cite{talenti} identified the optimal condition expressing how
far $\mathcal{A}$ can be from the identity and still be able to understand (\ref{pde:41}) as
a perturbation of the case $A(x)=I_{2n}$, when the operator is just the
sublaplacian. This is the so called Cordes condition that roughly says that all eigenvalues
of $A$ must cluster around a single value.
\begin{definition} (\cite{cordes},\cite{talenti}, \cite{domokos-manfredi})
We say that $A$ satisfies the Cordes condition $K_{\varepsilon ,
\sigma}$ if there exists $\varepsilon \in (0,1]$ and $\sigma >0
$ such that
\begin{equation}\label{cordescon}
    0< \frac{1}{\sigma} \leq \sum_{i,j=1}^{2n} \, a_{ij}^{2} (x)
    \leq \frac{1}{2n-1+\varepsilon}
    \left( \sum_{i=1}^{2n} a_{ii} (x) \right)^{2}
\end{equation}
for a.~e. $x\in\Omega$.
\end{definition}
Let $c_n=\frac{(n+2)}{n}$ for $n\ge 2$ and $c_1=3$ the constants in
the right-hand sides of Theorem \ref{theorem2}. We can now adapt the
proof of Theorem 2.1 in \cite{domokos-manfredi} to get
\begin{theorem} \label{theorem3} Let $M^{2n+1}$ be a strictly pseudo-convex pseudo-hermitian manifold
such that $\textrm{Ric}+\frac{n}{2}\textrm{Tor} \ge 0$ if $n\ge 2$
and $\textrm{Ric}-\frac{3}{2}\textrm{Tor} \ge 0$, $P_0\ge 0$ if
$n=1$. Let $0 < \varepsilon \leq 1$, $\sigma >0$ such that $\gamma =
\sqrt{(1 - \varepsilon) c_{n}} < 1$ and  $A$ satisfies the Cordes
condition $K_{\varepsilon , \sigma}$. Then for all $u \in
C_0^{\infty} (\Omega )$ we have the \textit{a-priori} estimate
\begin{equation} \|\mathfrak{X}^2 u \|_{L^2} \leq
\sqrt{1+\frac{2}{n}} \; \frac{1}{1 - \gamma} \; \|\alpha
\|_{L^{\infty}} \| \mathcal{A} u\|_{L^2} \, ,
\end{equation}
 where
$$\alpha (x) = \frac{\langle A(x) , I \rangle}{||A(x)||^2}=\frac{\sum_{i=1}^{2n}a_{ii}(x)}{\sum_{i,j=1}^{2n}a_{ij}^2(x)} \, .$$
\end{theorem}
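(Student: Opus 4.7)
The plan is to follow the Cordes--Talenti perturbation scheme, now with Theorem~\ref{theorem2} playing the role of the flat second-derivative bound. The starting point is the pointwise identity
$$
\Delta_{\mathfrak{X}} u(x) - \alpha(x)\,\mathcal{A}u(x) = \bigl\langle I - \alpha(x)A(x),\, \mathfrak{X}^2 u(x)\bigr\rangle,
$$
where the pairing is the Hilbert--Schmidt inner product on symmetric $2n\times 2n$ matrices and we have used $\Delta_{\mathfrak{X}} u = \langle I, \mathfrak{X}^2 u\rangle$ and $\mathcal{A}u = \langle A, \mathfrak{X}^2 u\rangle$. By Cauchy--Schwarz, this gives the pointwise bound $|\Delta_{\mathfrak{X}} u - \alpha\,\mathcal{A}u| \le \|I-\alpha A\|\,|\mathfrak{X}^2 u|$.

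The key algebraic step is to use the Cordes hypothesis $K_{\varepsilon,\sigma}$ to estimate $\|I-\alpha A\|$. With the definition of $\alpha$,
$$
\|I-\alpha A\|^2 = 2n - 2\alpha\,\mathrm{tr}(A) + \alpha^2\|A\|^2 = 2n - \frac{(\mathrm{tr}\,A)^2}{\|A\|^2},
$$
and the right inequality in (\ref{cordescon}) gives $(\mathrm{tr}\,A)^2/\|A\|^2 \ge 2n-1+\varepsilon$, hence $\|I-\alpha A\| \le \sqrt{1-\varepsilon}$ a.e. Combined with the previous display this yields
$$
\|\Delta_{\mathfrak{X}} u - \alpha\,\mathcal{A}u\|_{L^2} \le \sqrt{1-\varepsilon}\,\|\mathfrak{X}^2 u\|_{L^2}.
$$

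To close the estimate, apply Theorem~\ref{theorem2}: under the curvature-torsion hypothesis (and, when $n=1$, $P_0\ge 0$), the lower-order integral is non-negative and can be dropped, giving $\|\mathfrak{X}^2 u\|_{L^2} \le \sqrt{c_n}\,\|\Delta_{\mathfrak{X}} u\|_{L^2}$ with $\sqrt{c_n}=\sqrt{1+2/n}$. Using the triangle inequality and the previous bound,
$$
\|\mathfrak{X}^2 u\|_{L^2} \le \sqrt{c_n}\bigl(\|\alpha\|_{L^\infty}\|\mathcal{A}u\|_{L^2} + \sqrt{1-\varepsilon}\,\|\mathfrak{X}^2 u\|_{L^2}\bigr) = \sqrt{c_n}\|\alpha\|_{L^\infty}\|\mathcal{A}u\|_{L^2} + \gamma\,\|\mathfrak{X}^2 u\|_{L^2}.
$$
Since $\gamma<1$ by hypothesis, the self-improving term can be absorbed on the left, producing the claimed constant $\sqrt{1+2/n}/(1-\gamma)$.

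The main obstacle is really just bookkeeping: the ``hard work'' has been done in Theorem~\ref{theorem2}, and the remaining content is the purely linear-algebraic Cordes estimate on $\|I-\alpha A\|$, which is identical to the Euclidean and Heisenberg cases. No integration by parts is needed at this stage, so the argument transfers verbatim from \cite{domokos-manfredi} once Theorem~\ref{theorem2} is available and its constant $c_n$ is known.
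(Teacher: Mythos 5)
Your proposal is correct and follows essentially the same route as the paper: the paper simply quotes formula (2.7) of \cite{domokos-manfredi} for the bound $\|\Delta_{\mathfrak{X}}u-\alpha\mathcal{A}u\|_{L^2}^2\le(1-\varepsilon)\|\mathfrak{X}^2u\|_{L^2}^2$, applies Theorem~\ref{theorem2}, and states that the rest ``follows as in \cite{domokos-manfredi}.'' You have merely written out the details that the paper delegates to that reference --- the Hilbert--Schmidt computation of $\|I-\alpha A\|$ under the Cordes condition and the absorption of $\gamma\|\mathfrak{X}^2u\|_{L^2}$ --- and these are exactly the intended steps.
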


\begin{proof}
 We start from formula (2.7) in \cite{domokos-manfredi} which gives
$$\int_{\Omega} \left\vert \Delta_{\mathfrak{X}}u(x) - \alpha (x)
{\mathcal{A}} u(x) \right\vert^2 dx \leq (1-\varepsilon )
\int_{\Omega} |\mathfrak{X}u|^2 dx.
$$
We now apply Theorem \ref{theorem2} to get
$$\int_{\Omega} \left\vert \Delta_{\mathfrak{X}}u(x) - \alpha (x)
{\mathcal{A}} u(x) \right\vert^2 dx \leq (1-\varepsilon )
c_n\!\! \int_{\Omega}\!\! |\Delta_{\mathfrak{X}} f|^2.
$$
The theorem then follows as in \cite{domokos-manfredi}.
\qed\end{proof}
\subparagraph{Remark:} The hypothesis of Theorem \ref{theorem2}, $n\ge 2$,   can be weakened to assume
only a bound from below
$$ \textrm{Ric}+\frac{n}{2}\textrm{Tor} \ge -K, \text{   with }   K >0$$  to
obtain estimates of the type
\begin{equation}\label{remark}
 \int_M  |\mathfrak{X}^2 f|^2
\le\frac{(n+2)}{n}\!\! \int_M \!\! |\Delta_{\mathfrak{X}} f|^2 + 2 K\int_M |\mathfrak{X}f|^2.
\end{equation}
 A similar remark applies to the case $n=1$.\par

 We finish this paper by indicating how the \textit{a priori} estimate
 of Theorem \ref{theorem3} can be used to prove regularity for
 $p$-harmonic functions in the Heisenberg group $\mathcal{H}^n$when $p$ is close to $2$. We follow \cite{domokos-manfredi}, where
 full details can be found. Recall that,  for
 $1<p<\infty$,  a $p$-harmonic function $u$ in a domain $\Omega\subset\mathcal{H}^n$ is a function in
the horizontal Sobolev space
$$W_{\mathfrak{X}, \textrm{loc}}^{1,p} (\Omega)=\left\{
u\colon\Omega\mapsto\mathbb{R}\text{ such that }  u,\mathfrak{X}u\in L^p_{\textrm{loc}}(\Omega)\right\}$$ such that

 \begin{equation}\label{pde:45}
\sum_{i=1}^{2n} X_i \left( |\mathfrak{X} u |^{p-2} \, X_i u \right) = 0 \, ,
\; \mbox{in} \; \Omega \,
\end{equation}
in the weak sense. That is, for all $\phi\in C_0^{\infty}(\Omega)$ we have
\begin{equation}\label{pde:46}
\int_{\Omega } |\mathfrak{X}u (x)|^{p-2}(\mathfrak{X}u(x),
\mathfrak{X}\phi (x)\, dx =0.
\end{equation}
Assume for the moment that $u$ is a smooth solution of
(\ref{pde:45}). We can then differentiate to obtain
\begin{equation}\label{pde:47}
 \sum_{i,j=1}^{2n} \, a_{ij} \, X_i X_j u= 0 \, , \;
 \mbox{in} \; \Omega
 \end{equation}
where
$$a_{ij} (x) =   \delta_{ij} + (p-2)
\frac{X_i u (x) \, X_j u (x)}{ | \mathfrak{X}u (x) |^{2}} \,
.$$
A calculation shows that this matrix satisfies the Cordes
condition (\ref{cordescon}) precisely when
\begin{equation}\label{pde:49}
 p-2 \in \left( \frac{n - n \sqrt{4 n^2 + 4n -3}}{2n^2 + 2n -2}
 \, , \, \frac{n + n \sqrt{ 4 n^2 + 4n -3}}{2n^2 + 2n -2}\right).
 \end{equation}
In the case $n=1$ this simplifies to
$$p-2 \in \left( \frac{1 - \sqrt{5}}{2} \, , \,
\frac{1 + \sqrt{5}}{2} \right) \, .$$
We then deduce \textit{a priori} estimates for $\mathfrak{X}^2u$  from Theorem \ref{theorem3}. To apply the Cordes machinery to functions
that are only in $W_{\mathfrak{X}}^{1,p}$  we need to know that the
second derivatives $\mathfrak{X}^2u$ exist.  This is done in the Euclidean
case by a standard difference quotient argument applied to
a regularized $p$-Laplacian. In the Heisenberg case this would
correspond to proving that solutions to
\begin{equation}\label{pde:50}
\sum_{i=1}^{2n} X_i \left( \left( \frac{1}{m} + |\mathfrak{X} u |^2
\right)^{\frac{p-2}{2}} \, X_i u \right) = 0
\end{equation}
are smooth. Contrary to the Euclidean case (where solutions to
the regularized $p$-Laplacian are $C^{\infty}$-smooth) in the
subelliptic case this is  known only for $p\in [2, c(n))$ where
$c(n)=4$ for $n=1,2$, and $\lim_{n\to\infty}c(n)=2$
(see \cite{manfredimingione}.) The final result will combine the
limitations given  by (\ref{pde:49}) and $c(n)$.
\begin{theorem}\label{theorem4}
(Theorem 3.1 in \cite{domokos-manfredi})
For
$$2 \leq p < 2+ \frac{n + n \sqrt{
4n^2 + 4n -3}}{2n^2 + 2n -2} $$ we have that $p$-harmonic
functions in the Heisenberg group $\mathcal{H}^n$  are in
$W^{2,2}_{\mathfrak{X}, \textrm{loc}} (\Omega )$.
\end{theorem}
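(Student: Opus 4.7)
The plan is to combine the a-priori estimate of Theorem \ref{theorem3} with a regularization argument, following the scheme of \cite{domokos-manfredi}. Since the Heisenberg group has $\textrm{Ric}\equiv 0$, $\textrm{Tor}\equiv 0$, and the Paneitz operator is non-negative, the curvature hypotheses of Theorem \ref{theorem3} are automatic, and we are free to work with any $p$ in the stated range.

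First I would fix a $p$-harmonic function $u\in W^{1,p}_{\mathfrak{X},\textrm{loc}}(\Omega)$ and approximate it by solutions $u^m$ to the regularized equation \eqref{pde:50} on a compactly contained subdomain, with Dirichlet data $u$. Since we restrict to $2\le p < 2+\frac{n+n\sqrt{4n^2+4n-3}}{2n^2+2n-2}$, and one checks that this upper bound is strictly less than $c(n)$ in the range of $n$ where the smoothness theory of \cite{manfredimingione} applies, each $u^m$ is $C^\infty$. Standard energy estimates for the regularized equation give uniform control of $u^m$ in $W^{1,p}_{\mathfrak{X},\textrm{loc}}$, and $u^m\to u$ strongly in $W^{1,p}_{\mathfrak{X},\textrm{loc}}$ by the usual monotonicity argument for the $p$-Laplacian.

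Next, differentiating \eqref{pde:50} as in the derivation of \eqref{pde:47}, the smooth function $u^m$ solves a linear non-divergence equation $\mathcal{A}^m u^m=0$ with coefficients
\[
a^m_{ij}(x)=\delta_{ij}+(p-2)\frac{X_iu^m(x)\,X_ju^m(x)}{\frac{1}{m}+|\mathfrak{X}u^m(x)|^2}.
\]
A direct algebraic computation, identical to the one leading to \eqref{pde:49} in the unregularized case, shows that $A^m(x)$ satisfies the Cordes condition $K_{\varepsilon,\sigma}$ uniformly in $m$ and $x$, with $\varepsilon,\sigma$ depending only on $p$ and $n$, precisely because the ratio $|\mathfrak{X}u^m|^2/(\frac{1}{m}+|\mathfrak{X}u^m|^2)\in[0,1]$ and the Cordes bound \eqref{cordescon} is preserved under this convex interpolation with the identity. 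To convert the a-priori bound of Theorem \ref{theorem3} (which requires compactly supported test functions and would force $\mathfrak{X}^2 u^m=0$ if applied to $u^m$ directly) into a local estimate, I would localize: pick $\eta\in C_0^\infty(\Omega)$ and apply Theorem \ref{theorem3} to $v^m=\eta u^m$. Then $\mathcal{A}^m v^m$ picks up only terms with at most one horizontal derivative of $u^m$ multiplied by derivatives of $\eta$, giving
\[
\|\mathfrak{X}^2(\eta u^m)\|_{L^2}\;\le\;C(n,p,\eta)\Bigl(\|\mathfrak{X}u^m\|_{L^2(\mathrm{supp}\,\eta)}+\|u^m\|_{L^2(\mathrm{supp}\,\eta)}\Bigr),
\]
with constants independent of $m$. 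The previously established uniform $W^{1,p}_{\mathfrak{X},\textrm{loc}}$ bound (together with $p\ge 2$, so $L^p_{\text{loc}}\subset L^2_{\text{loc}}$ on compacta) makes the right-hand side uniformly bounded in $m$.

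Finally, extracting a weakly convergent subsequence of $\mathfrak{X}^2 u^m$ in $L^2_{\textrm{loc}}$ and identifying the weak limit with $\mathfrak{X}^2 u$ (using $u^m\to u$ in $W^{1,p}_{\mathfrak{X},\textrm{loc}}$ and integration by parts against test functions) yields $u\in W^{2,2}_{\mathfrak{X},\textrm{loc}}(\Omega)$. The main obstacle I anticipate is precisely this last step: verifying the uniform Cordes condition for $A^m$ across the full range of $p$ and $n$, and ensuring that the cutoff commutator terms generated in the computation of $\mathcal{A}^m(\eta u^m)$ are controllable by $\mathfrak{X}u^m$ in $L^2$ uniformly in $m$ — a mild but delicate issue since the coefficients $a^m_{ij}$ themselves depend on $\mathfrak{X}u^m$, so one must keep the commutator estimate purely in terms of $\eta$, $\mathfrak{X}\eta$, and $\mathfrak{X}u^m$, and not in terms of higher derivatives of $u^m$.
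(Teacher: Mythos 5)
Your overall scheme --- approximate by solutions of the regularized equation \eqref{pde:50}, check a uniform Cordes condition for the linearized coefficients $a^m_{ij}$, localize with a cutoff, apply the a-priori estimate of Theorem \ref{theorem3} uniformly in $m$, and pass to the weak limit --- is exactly the route the paper has in mind: the paper gives no independent proof of Theorem \ref{theorem4}, but quotes Theorem 3.1 of \cite{domokos-manfredi} and sketches precisely this strategy in the paragraphs preceding the statement. Your observations that the curvature hypotheses of Theorem \ref{theorem3} are vacuous on $\mathcal{H}^n$, that the Cordes condition \eqref{cordescon} survives the convex interpolation of the identity with the rank-one term (uniformly in $m$), and that the commutator terms in $\mathcal{A}^m(\eta u^m)$ are first order in $u^m$, are all correct and consistent with the intended argument.

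There is, however, one concrete gap: your claim that the Cordes threshold $2+\frac{n+n\sqrt{4n^2+4n-3}}{2n^2+2n-2}$ is strictly below $c(n)$ ``in the range of $n$ where the smoothness theory of \cite{manfredimingione} applies'' is only true for small $n$. For $n=1,2$ one has $c(n)=4$ while the Cordes bound is about $3.62$ and $3.12$, so your argument closes; but the Cordes bound tends to $3$ as $n\to\infty$ while $c(n)\to 2$, so for larger $n$ the smoothness of the approximants $u^m$ --- the very input you need in order to differentiate \eqref{pde:50} and invoke Theorem \ref{theorem3} --- is not known on the whole stated interval of $p$. The paper flags exactly this point when it says the final result must combine the limitation \eqref{pde:49} with the limitation coming from $c(n)$. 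As written, your proof establishes the conclusion only for $2\le p<\min\bigl\{c(n),\,2+\tfrac{n+n\sqrt{4n^2+4n-3}}{2n^2+2n-2}\bigr\}$; to recover the full range claimed in the statement for every $n$ you must either supply enough a-priori regularity of solutions of \eqref{pde:50} beyond $c(n)$ (e.g.\ by a difference-quotient argument that avoids full smoothness) or fall back on the original proof in \cite{domokos-manfredi}.
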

At least in the one-dimensional case $\mathcal{H}^1$ one can
also go below $p=2$. See Theorem 3.2 in \cite{domokos-manfredi}.
We also note that when $p$ is away from $2$, for example $p>4$ nothing
is known regarding the regularity of solutions to (\ref{pde:45}) or
its regularized version (\ref{pde:50}) unless we assume
\text{a  priori} that  the length of the gradient is bounded
below and above
$$0< \frac{1}{M} \le |\mathfrak{X}u| \le M < \infty.$$
See \cite{capogna} and \cite{manfredimingione}.

\begin{acknowledgement} S.C. supported in part by NSF Award DMS-0600971.
J.J.M. supported  in part  by NSF award
 DMS-0500983. S.C. wishes to thank Shri S. Devananda and Shri
 Raghavendra for encouragement in a difficult moment.
\end{acknowledgement}

\printindex
\end{document}